\theoremstyle{definition}
\def\fnum{equation} 
\newtheorem{Thm}[\fnum]{Theorem}
\newtheorem{Cor}[\fnum]{Corollary}
\newtheorem{Lem}[\fnum]{Lemma}
\newtheorem{Pro}[\fnum]{Proposition}
\numberwithin{equation}{section}
\newcommand{\Vol}{{\text{Vol}}}
\newcommand{\nn}{{\bf{n}}}
\newcommand{\Sing}{{\text{Sing}}}
\newcommand{\Reg}{{\text{Reg}}}
\newcommand{\cov}{{\text{Cov}}}
\def\RR{{\bold R}}
\def\SS{{\bold S}}
\newcommand{\e}{{\text {e}}}
\newcommand{\cF}{{\mathcal{F}}}
\newcommand{\cL}{{\mathcal{L}}}
\newcommand{\eqr}[1]{(\ref{#1})}
\def\bH{{\bold H}}
\title{A strong Frankel theorem for shrinkers }
\author{Tobias Holck Colding}%
\address{MIT, Dept. of Math.\\
77 Massachusetts Avenue, Cambridge, MA 02139-4307.}
\author{William P. Minicozzi II}%
\thanks{The  authors
were partially supported by NSF  DMS Grants   2104349  and 2005345.}
\email{colding@math.mit.edu and minicozz@math.mit.edu}
\begin{document}

\maketitle

\begin{abstract}
We prove a strong Frankel theorem for mean curvature flow shrinkers in all dimensions: 
Any two shrinkers in a sufficiently large  ball  must intersect.
  In particular, the shrinker itself must be connected in all large balls.  The key to the proof is a strong Bernstein theorem for incomplete stable Gaussian surfaces.
 \end{abstract}
 
% \tableofcontents

\section{Introduction}

A shrinker $\Sigma^n \subset \RR^N$ is a submanifold so that mean curvature flow (MCF) starting from $\Sigma$  
evolves by scaling and shrinks homothetically  to become singular.   The blow  up at any  MCF singularity is a shrinker;  understanding  them is crucial to understanding  singularity formation,   \cite{H, I, W}.
Shrinkers also have a variational interpretation as Gaussian minimal submanifolds.

\vskip1mm
We will prove a strong Frankel theorem for shrinkers that holds even with  boundary:

\begin{Thm}	\label{c:connecting}
If  $\Sigma_1^n , \Sigma_2^n  \subset \overline{B_R} \subset  \RR^{n+1}$ are   compact
embedded shrinkers,   $\partial \Sigma_1 , \partial \Sigma_2 \subset \partial B_R$,  $R \geq R_n$, and $B_R \cap \Sigma_i \ne \emptyset$, 
%\begin{align}
%	B_{R} \cap \partial \Sigma_i = \emptyset {\text{ and }} B_{R} \cap \Sigma_i \ne \emptyset \, , 
%\end{align}
then 
\begin{align}
	B_{R_n} \cap \Sigma_1 \cap \Sigma_2 \ne \emptyset \, .
\end{align}
\end{Thm}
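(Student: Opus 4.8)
\emph{Plan of proof.} This is a Frankel‑type argument. Assume for contradiction that $\Sigma_1\cap\Sigma_2\cap B_{R_n}=\emptyset$. The idea is to produce, from the ``region between'' $\Sigma_1$ and $\Sigma_2$, a stable incomplete Gaussian surface sitting in $\overline{B_R}$ with boundary on $\partial B_R$ yet reaching into a fixed ball about the origin, and then to contradict its existence using the strong Bernstein theorem for incomplete stable Gaussian surfaces. This parallels the classical Frankel theorem (shortest geodesic between the two hypersurfaces, then second variation), but the conformal metric $\tilde g=e^{-|x|^2/(2n)}\delta$ on $\RR^{n+1}$ for which shrinkers are minimal hypersurfaces has $\mathrm{Ric}_{\tilde g}$ negative for $|x|$ large, so the geodesic second variation alone does not close the argument; the strong Bernstein theorem takes the place of the positive‑Ricci hypothesis.

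Two preliminary reductions come first. (i) \emph{Each $\Sigma_i$, and each of its components, meets $\overline{B_{\sqrt{2n}}}$.} At an interior minimum of $|x|^2$ on a shrinker $\Sigma$ one has $x=x^\perp$, hence $0\le\Delta_\Sigma|x|^2=2n-|x^\perp|^2=2n-|x|^2$; the minimum is not attained only on $\partial\Sigma\subset\partial B_R$, for then $\Sigma\subset\partial B_R$, which is not a shrinker when $R\ne\sqrt{2n}$, contradicting $\Sigma\cap B_R\ne\emptyset$. (ii) \emph{Reduction to the disjoint case.} By Sard's theorem $\Sigma_i\cap\overline{B_r}$ is again a compact embedded shrinker with boundary on $\partial B_r$ for a.e.\ $r$; since $\Sigma_1\cap\Sigma_2$ is compact and misses $B_{R_n}$, one may choose such an $r$ with $R_n\le r\le R$ (the slack here is part of why $R_n$ is large) and $\Sigma_1\cap\Sigma_2\cap\overline{B_r}=\emptyset$; relabelling $R:=r$, we may assume $\Sigma_1\cap\Sigma_2=\emptyset$, with both $\Sigma_i$ still meeting $\overline{B_{\sqrt{2n}}}$.

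Now fix $q_i\in\Sigma_i\cap\overline{B_{\sqrt{2n}}}$ and minimize the Gaussian area $F(\Sigma)=\int_\Sigma e^{-|x|^2/4}$ over hypersurfaces in $\overline{B_R}$ separating $\Sigma_1$ from $\Sigma_2$. The infimum is positive — any separating hypersurface must cross the segment $[q_1,q_2]\subset\overline{B_{\sqrt{2n}}}$, and so carries a definite amount of area in the region where $e^{-|x|^2/4}$ is bounded below — and it is attained, by the usual compactness for the smooth uniformly elliptic Gaussian integrand, by a stable stationary integral varifold $\Sigma$. A compact shrinker without boundary is unstable (the second variation of $F$ is negative on $\phi\equiv1$), so $\Sigma$ is not closed; by the strong maximum principle for the shrinker equation, $\Sigma$ is either disjoint from $\Sigma_1\cup\Sigma_2$ away from $\partial B_R$, or coincides with a component of one of them. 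In every case $\Sigma$ is a stable incomplete Gaussian surface in $\overline{B_R}$ with $\partial\Sigma\subset\partial B_R$, which (crossed by $[q_1,q_2]$, or else by (i)) meets $\overline{B_{\sqrt{2n}}}$, and whose boundary separates $\partial B_R$. Applying the strong Bernstein theorem for incomplete stable Gaussian surfaces, for $R\ge R_n$ no such $\Sigma$ can reach into $\overline{B_{\sqrt{2n}}}$ — heuristically, a stable Gaussian surface filling a far‑away separating boundary must dome away from the origin, since even the flat disk $P\cap\overline{B_R}$ cut out by a hyperplane through the origin is $F$-unstable with fixed boundary once $R$ is large. This contradiction proves $\Sigma_1\cap\Sigma_2\cap B_{R_n}\ne\emptyset$.

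The main obstacle is the strong Bernstein theorem itself — the key input, established separately — which must be robust enough to forbid a stable incomplete Gaussian surface with far‑away boundary from approaching the origin; the rest is the classical Frankel skeleton transplanted to Gaussian space. Within the present deduction, the two points that need care are the existence and, in dimensions $n\ge7$, the regularity of the minimizer $\Sigma$ — best handled by stating the strong Bernstein theorem directly for stable stationary varifolds, so that a possible codimension‑$\ge7$ singular set is harmless — and the elementary but topologically flavoured assertions that the infimum above is positive and that the minimizer truly meets $\overline{B_{\sqrt{2n}}}$ and separates $\partial B_R$.
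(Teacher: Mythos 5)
Your key idea — produce a Gaussian stable $F$-stationary hypersurface passing near the origin, then contradict the strong Bernstein theorem — is the same as the paper's, and both proofs open with the same maximum-principle step showing each $\Sigma_i$ meets $\overline{B_{\sqrt{2n}}}$. But the variational construction you use to produce that stable surface is genuinely different, and the difference is where the gap lies.

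The paper solves a fixed-boundary Plateau problem: it takes the region $\Omega$ between $\Sigma_1$ and $\Sigma_2$ inside $B_{R_n}$, minimizes Gaussian area in $\overline{\Omega}$ among surfaces with boundary equal to $\partial(B_{R_n}\cap\Sigma_1)$, and then uses the non-zero linking number of the connecting segment $\sigma\subset\overline{B_{\sqrt{2n}}}$ with that boundary to conclude the minimizer $\Gamma$ must meet $\sigma$. A connected component $\Gamma_1$ of $B_{R_n}\cap\Gamma$ through $\sigma$ is then a proper Gaussian stable $F$-stationary varifold in $B_{R_n}$ with $\partial\Gamma_1\cap B_{R_n}=\emptyset$ and $\dim_M\Sing(\Gamma_1)\leq n-7$, directly contradicting Theorem \ref{t:stable2}. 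You instead minimize Gaussian area over ``hypersurfaces in $\overline{B_R}$ separating $\Sigma_1$ from $\Sigma_2$.'' This is a partitioning-type problem, not a Plateau problem, and neither the existence of a minimizer nor its structure is routine: the class of separating hypersurfaces is not closed under the flat/varifold convergence used to extract a minimizer (a limit of separators need not separate; mass can drift to $\partial B_R$), and you do not say what the boundary data of the competitors is, so it is not clear the minimizer even has $\partial\Sigma\subset\partial B_R$ as you assert. The paper's fixed boundary is preserved in the limit, and the linking argument is the rigorous replacement for your ``must cross $[q_1,q_2]$'' observation, robust under convergence and under the minimizer acquiring pieces on $\partial B_{R_n}$.

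Your preliminary reduction (ii) also has a genuine gap: the contradiction hypothesis is that $\Sigma_1\cap\Sigma_2$ misses the \emph{open} ball $B_{R_n}$, so it may accumulate on $\partial B_{R_n}$, in which case $\Sigma_1\cap\Sigma_2\cap\overline{B_r}\ne\emptyset$ for every $r\geq R_n$ and no admissible slice makes the two disjoint. The paper never needs this reduction because it sets up the Plateau problem entirely inside $B_{R_n}$, where the shrinkers are disjoint by assumption, and lets $\Sigma_1,\Sigma_2$ serve as barriers only there.

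Finally, you appeal to a ``strong Bernstein theorem for incomplete stable Gaussian surfaces'' stated loosely as forbidding a stable surface with far-away boundary from entering $\overline{B_{\sqrt{2n}}}$. The theorem the argument actually needs, and the one the paper proves (Theorem \ref{t:stable2}), is sharper and does not involve the origin: there is simply no proper Gaussian stable $F$-stationary varifold in $B_{R_n}$ with $\dim_M\Sing<n-2$, at all. You do not need the stable surface to enter a fixed small ball; properness in $B_{R_n}$ already gives the contradiction.
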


The constant $R_n$ only depends on the dimension $n$.

\vskip1mm
A classical theorem of Hadamard from 1897 proves that any two closed geodesics in  a complete surface with positive curvature must intersect.
In \cite{F}, Frankel generalized this to   compact minimal hypersurfaces in a manifold of positive Ricci curvature. 
Frankel called this the  ``Generalized Hadamard Theorem,'' but it is now known as the Frankel theorem.  It has 
played an important role in minimal surface theory,  \cite{L},   free boundary problems, \cite{FL}, and mean
curvature flow, \cite{M1, M2, MW}.

\vskip1mm
A  consequence of Theorem \ref{c:connecting} is that shrinkers are connected in all sufficiently large balls around the origin:

\begin{Cor}	\label{c:connectup}
If $\Sigma^n   \subset \RR^{n+1}$ is a compact 
embedded shrinker  with $R \geq R_n$ and 
\begin{align}
	\partial \Sigma  \subset \partial B_{R} {\text{ and }} B_{R} \cap \Sigma \ne \emptyset \, , 
\end{align}
then $B_{R} \cap \Sigma$ is connected.  In particular, if $\Sigma$ is complete and $\partial \Sigma = \emptyset$, then $B_{R} \cap \Sigma$ is connected for every $ 
R \geq R_n$.
\end{Cor}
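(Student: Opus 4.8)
The plan is to deduce Corollary~\ref{c:connectup} from Theorem~\ref{c:connecting} by contradiction, after passing to a slightly smaller, generic ball. The only delicate point is that $B_R\cap\Sigma$ is, in general, just an open subset of $\Sigma$ and not the interior of an honest compact manifold with boundary: $\Sigma$ may be tangent to $\partial B_R$ from inside along part of its interior. So one should not try to split $B_R\cap\Sigma$ into pieces directly. (Enlarging the dimensional constant slightly, I may assume $R>R_n$.)

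Suppose, for contradiction, that $B_R\cap\Sigma$ is disconnected. Being open in $\Sigma$, hence locally path-connected, it has two distinct connected components $U$ and $U'$, each a nonempty open subset of $\Sigma$ meeting $B_R$. By Sard's theorem $\Sigma$ is transverse to $\partial B_r$ for all $r$ outside a set of measure zero, so I can choose such an $r$ with $R_n\le r<R$, and (taking $r$ close enough to $R$) with $U\cap B_r\ne\emptyset$ and $U'\cap B_r\ne\emptyset$. Then $\Sigma_r:=\Sigma\cap\overline{B_r}$ is a compact embedded shrinker: it is compact because $\Sigma$ is, it satisfies the shrinker equation because that equation is local, and --- since $\Sigma$ is transverse to $\partial B_r$ and $\partial\Sigma\subset\partial B_R$ is disjoint from $\overline{B_r}$ --- it is a manifold with boundary $\partial\Sigma_r=\Sigma\cap\partial B_r\subset\partial B_r$ whose interior is exactly $B_r\cap\Sigma$. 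Moreover $B_r\cap\Sigma$ is disconnected: a path in $B_r\cap\Sigma\subseteq B_R\cap\Sigma$ from a point of $U$ to a point of $U'$ would join the distinct components $U$ and $U'$ of $B_R\cap\Sigma$, which is impossible.

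Since the interior of a connected manifold with boundary is connected, the disconnectedness of $B_r\cap\Sigma$ forces $\Sigma_r$ to be disconnected. Pick two distinct connected components $\Sigma_1$ and $\Sigma_2$ of $\Sigma_r$. Each is a nonempty compact embedded shrinker with $\partial\Sigma_i\subset\partial B_r$, and each meets $B_r$ (a nonempty manifold with boundary has nonempty interior, and that interior lies in $B_r\cap\Sigma$). Applying Theorem~\ref{c:connecting} with the ball $B_r$ --- legitimate since $r\ge R_n$ --- gives $B_{R_n}\cap\Sigma_1\cap\Sigma_2\ne\emptyset$, contradicting that $\Sigma_1$ and $\Sigma_2$ are disjoint. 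Hence $B_R\cap\Sigma$ is connected. The last assertion of the corollary is the case $\partial\Sigma=\emptyset$; in the complete, possibly noncompact, case the only change is that $\Sigma_r=\Sigma\cap\overline{B_r}$ is compact because a complete self-shrinker is properly embedded, rather than because $\Sigma$ is compact.

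The step I expect to need the most care is this reduction --- recognizing that the right objects to feed into Theorem~\ref{c:connecting} are the connected components of $\Sigma\cap\overline{B_r}$ for a \emph{generic} $r<R$, rather than the components of $B_R\cap\Sigma$, since the latter may be mutually separated only along a locus where $\Sigma$ touches $\partial B_R$ and hence need not bound smooth shrinkers on the sphere --- together with the bookkeeping that the disconnection descends to the smaller ball and that each component of the truncation still reaches into $B_r$. In the complete case one additionally uses that completeness forces proper embeddedness, so that the truncations are compact.
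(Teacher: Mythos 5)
Your proof is correct and takes essentially the same route as the paper's one-line proof, which applies Theorem~\ref{c:connecting} to the connected components of $B_R\cap\Sigma$ viewed as shrinkers with boundary in $\partial B_R$. Your added care --- first passing by Sard's theorem to a generic radius $r\in[R_n,R)$ so that $\Sigma\cap\overline{B_r}$ is an honest compact manifold-with-boundary --- rigorously handles the possibility (left implicit in the paper) that $\Sigma$ is tangent to $\partial B_R$ from inside, at the negligible cost of nudging up the dimensional constant so that $R>R_n$.
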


 The simplest shrinkers are  generalized cylinders   $\SS^k_{\sqrt{2k}} \times \RR^{n-k}$, where the $k$-sphere is centered at  $0$ with radius $\sqrt{2k}$ and  $ \RR^{n-k}$ contains $0$.   Here $k=0, 1 , \dots ,n$, where $k=0$ gives a hyperplane, $k=n$ gives an $n$-sphere, 
 and we allow all rotations of the Euclidean factor.
 Taking one of the shrinkers in 
  Theorem \ref{c:connecting} to be a generalized cylinder implies that
 any  shrinker must intersect   such a cylinder:

\begin{Cor}	\label{c:halfspace}
If $\Sigma^n   \subset \RR^{n+1}$ is a properly
embedded shrinker, $R \geq R_n$, 
\begin{align}
	\partial \Sigma  \subset \partial B_{R} {\text{ and }} B_{R} \cap \Sigma \ne \emptyset \, , 
\end{align}
then $B_{R_n} \cap \Sigma$ intersects every $\SS^k_{\sqrt{2k}} \times \RR^{n-k}$ for $k=0,1, \dots , n$.
\end{Cor}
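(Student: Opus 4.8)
The plan is to deduce Corollary~\ref{c:halfspace} directly from Theorem~\ref{c:connecting}, taking for the second shrinker the portion of a generalized cylinder that lies in $\overline{B_R}$. Fix $k\in\{0,1,\dots,n\}$ and let $C=\SS^k_{\sqrt{2k}}\times\RR^{n-k}\subset\RR^{n+1}$, with an arbitrary orthogonal splitting of $\RR^{n+1}$ into the two factors allowed. We must produce a point of $B_{R_n}\cap\Sigma\cap C$. Since only the part of $C$ inside $B_R$ will ever be used, the first task is to verify that the truncation $C\cap\overline{B_R}$ is an admissible competitor in Theorem~\ref{c:connecting}: a compact embedded shrinker whose boundary lies in $\partial B_R$ and which meets $B_R$.

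To see this, let $r$ be the distance to the origin in $\RR^{n+1}$. On $C$ one has $r^2=2k+d^2$, where $d$ denotes the distance to the core $\SS^k_{\sqrt{2k}}\times\{0\}$; hence $r^2$ takes values in $[2k,\infty)$ and its only critical value on $C$ is $2k$, attained exactly along the core. Consequently, whenever $R>\sqrt{2k}$ the number $R$ is a regular value of $r|_C$, so $C\cap\overline{B_R}=\{p\in C:|p|\le R\}$ is a smooth compact hypersurface with boundary $\{p\in C:|p|=R\}=C\cap\partial B_R\subset\partial B_R$; it is embedded, and being a piece of the self-shrinker $C$ it is itself a shrinker. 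Moreover any point of the core has distance $\sqrt{2k}<R$ from the origin, so $B_R\cap C\neq\emptyset$. After enlarging the dimensional constant of Theorem~\ref{c:connecting}, if necessary, so that $R_n>\sqrt{2n}$, all of this holds for every $R\ge R_n$ and every $k=0,1,\dots,n$; the cases $k=0$ (a hyperplane through $0$) and $k=n$ (the closed shrinker $\SS^n_{\sqrt{2n}}$, which sits in $B_R$ with empty boundary) are covered by the same reasoning.

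Now apply Theorem~\ref{c:connecting} with $\Sigma_1=\Sigma$ --- a compact (since properly embedded in $\overline{B_R}$) embedded shrinker with $\partial\Sigma\subset\partial B_R$ and $B_R\cap\Sigma\neq\emptyset$ --- and $\Sigma_2=C\cap\overline{B_R}$. It produces a point
\[
p\in B_{R_n}\cap\Sigma\cap\bigl(C\cap\overline{B_R}\bigr)\,.
\]
Since $R_n\le R$ we have $B_{R_n}\subset\overline{B_R}$, so in fact $p\in B_{R_n}\cap\Sigma\cap C$. Thus $B_{R_n}\cap\Sigma$ meets $C$, and as $k\in\{0,1,\dots,n\}$ was arbitrary this proves the corollary.

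The only step that is not purely formal is the verification in the second paragraph that the truncated cylinder $C\cap\overline{B_R}$ genuinely satisfies the hypotheses of Theorem~\ref{c:connecting} --- in particular that the truncating sphere is transverse to $C$, so that $C\cap\overline{B_R}$ is a manifold with boundary contained in $\partial B_R$, and that this truncated piece is nonempty inside $B_R$. Both are immediate from the formula $r^2=2k+d^2$ on $C$ together with the choice $R_n>\sqrt{2n}$, so there is no real obstacle: once Theorem~\ref{c:connecting} is available, Corollary~\ref{c:halfspace} reduces to this bookkeeping.
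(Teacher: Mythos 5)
Your proof is correct and takes essentially the same approach as the paper. The paper's own proof is a single sentence --- apply Theorem~\ref{c:connecting} with $\Sigma_1=\Sigma$ and $\Sigma_2$ a generalized cylinder --- and you simply spell out the bookkeeping that the truncation $C\cap\overline{B_R}$ is an admissible compact embedded shrinker with boundary in $\partial B_R$, using the identity $r^2=2k+d^2$ and the requirement $R_n>\sqrt{2n}$; this is a correct and useful elaboration. One small slip: the parenthetical claim that $\Sigma$ is ``compact (since properly embedded in $\overline{B_R}$)'' does not follow from the stated hypotheses, since $\partial\Sigma\subset\partial B_R$ does not force $\Sigma\subset\overline{B_R}$ (for instance, a complete shrinker with $\partial\Sigma=\emptyset$ qualifies vacuously). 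To apply Theorem~\ref{c:connecting} one should also truncate $\Sigma$ to $\Sigma\cap\overline{B_R}$ --- exactly as the paper implicitly does in the proof of Corollary~\ref{c:connectup} --- and the same transversality/regular-value argument you ran for the cylinder handles this; this does not affect the substance of your argument.
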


There is also a corresponding rigidity theorem (see Theorem \ref{t:halfspace2} for a stronger version):

\begin{Thm}	\label{t:halfspace}
 If $\Sigma^n   \subset \RR^{n+1}$ is a complete properly embedded shrinker that lies on one side of 
 $\SS^k_{\sqrt{2k}} \times \RR^{n-k}$ for some $k$, then $\Sigma = \SS^k_{\sqrt{2k}} \times \RR^{n-k}$.
 \end{Thm}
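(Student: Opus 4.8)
The plan is to derive Theorem~\ref{t:halfspace} from Theorem~\ref{c:connecting} together with the strong maximum principle for shrinkers; essentially all of the difficulty is already contained in Theorem~\ref{c:connecting} and the strong Bernstein theorem behind it. Write $C=\SS^k_{\sqrt{2k}}\times\RR^{n-k}$: this is a properly embedded shrinker and a hypersurface, $\RR^{n+1}\setminus C$ has exactly two components, and the hypothesis is that $\Sigma$ lies in the closure $\overline{U}$ of one of them, so $\Sigma$ may touch $C$ but cannot cross it. \emph{First, $\Sigma$ must meet $C$:} using Sard's theorem pick $R\ge R_n$ with $B_R\cap\Sigma\ne\emptyset$ and $R$ a regular value of $|x|^2$ on $\Sigma$, so that $\Sigma\cap\overline{B_R}$ is a compact embedded shrinker with $\partial(\Sigma\cap\overline{B_R})\subset\partial B_R$; the truncation $C\cap\overline{B_R}$ is likewise such a shrinker (when $C=\SS^n_{\sqrt{2n}}$ it is already closed, and otherwise every $R>\sqrt{2k}$ is automatically a regular value of $|x|^2$ on $C$). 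Since both meet $B_R$, Theorem~\ref{c:connecting} gives $B_{R_n}\cap\Sigma\cap C\ne\emptyset$.

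\emph{Next, $\Sigma$ and $C$ coincide near any intersection point.} Fix $p\in\Sigma\cap C$. Because $\Sigma$ lies weakly on one side of $C$, the two hypersurfaces are tangent at $p$; writing them near $p$ as graphs $u_\Sigma$ and $u_C$ over $T_p\Sigma=T_pC$, the difference $w=u_\Sigma-u_C$ is one-signed and vanishes at $p$. Both $u_\Sigma$ and $u_C$ solve the quasilinear elliptic graphical shrinker equation $H=\tfrac12\langle x,\nn\rangle$, so $w$ solves a linear elliptic equation; since $w\ge 0$ and $w(p)=0$, E.~Hopf's strong maximum principle forces $w\equiv 0$ near $p$, i.e.\ $\Sigma=C$ in a neighborhood of $p$. (Equivalently, shrinkers are minimal hypersurfaces for a conformal metric on $\RR^{n+1}$, and this is the tangency principle for minimal hypersurfaces.)

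\emph{Finally, propagate this.} The set $A=\{p\in\Sigma:\Sigma=C\text{ in a neighborhood of }p\}$ is open by definition and closed in $\Sigma$, since if $p_i\in A$ and $p_i\to p$ then $p\in\Sigma\cap C$ and the previous step applies at $p$; it is nonempty by the first two steps. Hence $A$ is a nonempty union of components of $\Sigma$, and any such component $\Sigma'\subseteq C$ is both open and closed in $C$, so $\Sigma'=C$ by connectedness of $C$; thus $C\subseteq\Sigma$. If $\Sigma$ had a further component $\Sigma''$, it would be a complete properly embedded shrinker disjoint from $C$ (by embeddedness, using $C\subseteq\Sigma$), and running the first step with $\Sigma''$ in place of $\Sigma$ would give $B_{R_n}\cap\Sigma''\cap C\ne\emptyset$, a contradiction; hence $\Sigma=C$. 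The real obstacle is Theorem~\ref{c:connecting} itself; among the steps above, the only points requiring care are the choice of a regular radius $R$ making $\Sigma\cap\overline{B_R}$ and $C\cap\overline{B_R}$ honest compact shrinkers with boundary on $\partial B_R$, and the routine check that the graphical shrinker operator satisfies the hypotheses of the strong maximum principle at a one-sided contact point.
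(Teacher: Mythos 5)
Your proposal is correct and takes essentially the same route as the paper: both deduce the result from the strong Frankel theorem (the paper packages it as Corollaries~\ref{c:connectup} and~\ref{c:halfspace}, you invoke Theorem~\ref{c:connecting} directly) combined with the strong maximum principle for minimal hypersurfaces in the conformal Gaussian metric, propagated by the usual open--closed argument. The only cosmetic difference is that the paper uses connectedness of $B_R\cap\Sigma$ from Corollary~\ref{c:connectup} to finish in one step, whereas you first identify one component with the cylinder and then run Frankel a second time to exclude further components; both are routine.
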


The case $k=0$ of Theorem \ref{t:halfspace} is a strong halfspace theorem for shrinkers.
The strong halfspace theorem for minimal surfaces, \cite{HM}, states that any complete  proper minimal surface in $\RR^3$ that lies in a half-space must itself be a plane.  Unlike the results here, the half-space theorem for minimal surfaces holds only in $\RR^3$ and is a global result where it is essential that the   minimal surface is complete.  However, for embedded minimal disks, there is a  local estimate
that plays a key role in understanding  embedded minimal surfaces,  \cite{CM3}.

 Theorem $3$ in \cite{PR} gave the first half-space type-theorem for complete proper shrinkers without boundary, cf. \cite{CE}.  Corollary \ref{c:halfspace} only requires a large enough piece of a shrinker and   it holds for all generalized cylinders.
 
 The proof of the classical Frankel theorem, \cite{F}, used the second variation for geodesics.  This argument was extended to closed shrinkers, and more generally weighted closed minimal surfaces, 
in \cite{WW}.
 The paper
 \cite{IPR} proved a Frankel theorem for complete noncompact shrinkers without boundary under some assumptions at infinity, this time using a Reilly formula and potential theory; see also \cite{BW, W2}.
  
 The approach here is different, relying on a barrier argument and strong Bernstein theorem for Gaussian stable  shrinkers, possibly with boundary. It follows from \cite{CM1} that every properly embedded complete Gaussian minimal hypersurface $\Sigma^n \subset \RR^{n+1}$ must be Gaussian unstable
(cf. Theorem $0.5$ in \cite{CM4}).  The next theorem gives a strong generalization of this, allowing for boundary outside of a fixed ball
(see Theorem \ref{t:stable2} for a stronger result).

\begin{Thm}	\label{t:stable}
If $\Sigma^n \subset   \RR^N$ is a properly embedded shrinker, $R \geq R_n$,   $B_{R}\cap \Sigma \ne \emptyset$
and   $ B_{R} \cap \partial \Sigma = \emptyset$, then $\Sigma$ is Gaussian unstable.
\end{Thm}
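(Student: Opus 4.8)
\emph{Proof proposal.}
The plan is a direct second-variation argument with an explicit cut-off normal variation supported in $B_R$; no barriers or Bernstein-type classification are needed. Write $F(\Sigma)=(4\pi)^{-n/2}\int_\Sigma e^{-|x|^2/4}$, so shrinkers are the critical points of $F$ and, for a normal variation $V$ vanishing near $\partial\Sigma$, the second variation is $-(4\pi)^{-n/2}\int_\Sigma\langle V,\cL V\rangle\,e^{-|x|^2/4}$, where $\cL=\Delta-\tfrac12\langle x,\nabla(\cdot)\rangle+|A|^2+\tfrac12$ is the Jacobi operator on normal fields. Thus ``$\Sigma$ Gaussian unstable'' means $\int_\Sigma\langle V,\cL V\rangle\,e^{-|x|^2/4}>0$ for some normal $V$ vanishing near $\partial\Sigma$; since $B_R\cap\partial\Sigma=\emptyset$, it is enough to find such a $V$ supported in $B_R\cap\Sigma$. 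I will use the classical identities from \cite{CM1}, valid on any shrinker, that $\cL\vec{H}=\vec{H}$ and, crucially here, $\cL(v^\perp)=\tfrac12 v^\perp$ for every fixed vector $v\in\RR^N$, where $v^\perp$ denotes the normal part of $v$.

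\emph{Step 1 (the test variation).} For a unit vector $v\in\RR^N$ and $\eta\in C_c^\infty(B_R\cap\Sigma)$ set $V=\eta\,v^\perp$. Integrating by parts and using $\cL(v^\perp)=\tfrac12 v^\perp$ (which already absorbs the $|A|^2$ term) together with $\nabla_X|v^\perp|^2=2\langle v^\perp,\nabla^\perp_X v^\perp\rangle$ gives the exact identity
\[
\int_\Sigma\langle V,\cL V\rangle\,e^{-|x|^2/4}=\tfrac12\int_\Sigma\eta^2|v^\perp|^2\,e^{-|x|^2/4}-\int_\Sigma|\nabla\eta|^2|v^\perp|^2\,e^{-|x|^2/4}.
\]
Take $\eta\equiv1$ on $B_{R-1}$, $\eta\equiv0$ outside $B_{R-1/2}$, $0\le\eta\le1$, $|\nabla\eta|\le3$. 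On the annulus $B_{R-1/2}\setminus B_{R-1}$ one has $|v^\perp|\le1$ and $e^{-|x|^2/4}\le e^{-(R-1)^2/4}$, and shrinkers have Euclidean volume growth $\Vol(B_R\cap\Sigma)\le C_n R^n$ (a consequence of $\Delta_\Sigma|x|^2\le2n$, valid here since $B_R\cap\partial\Sigma=\emptyset$), so the gradient term is at most $9C_nR^ne^{-(R-1)^2/4}\to0$. Hence the theorem reduces to producing a unit $v$ with $\int_{B_{R-1}\cap\Sigma}|v^\perp|^2\,e^{-|x|^2/4}\ge c_0(n)>0$, with $c_0(n)$ depending only on $n$: for that $v$ and this $\eta$ the right-hand side above is then positive once $R\ge R_n=R_n(n)$, giving instability. (If $\Sigma$ is closed one simply takes $\eta\equiv1$ and no largeness of $R$ is needed.)

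\emph{Step 2 (the lower bound).} Since $\sum_{j=1}^N|e_j^\perp|^2\equiv N-n\ge1$ on $\Sigma$, some coordinate vector $e_j$ satisfies $\int_{B_{R-1}\cap\Sigma}|e_j^\perp|^2e^{-|x|^2/4}\ge\tfrac{1}{n+1}\int_{B_{R-1}\cap\Sigma}e^{-|x|^2/4}$, so it suffices to bound $\int_{B_{R-1}\cap\Sigma}e^{-|x|^2/4}$ below by a dimensional constant and take $v=e_j$. For this I would first show $\Sigma$ meets a fixed ball about the origin: $B_R\cap\Sigma$ is nonempty, precompact, and has no boundary in $B_R$, so $|x|$ attains a minimum over it at an interior point $q$, where $q$ is normal to $\Sigma$, hence $x^\perp(q)=q$, and $\Delta_\Sigma|x|^2(q)\ge0$; combined with the shrinker identity $\Delta_\Sigma|x|^2=2n-|x^\perp|^2$ this forces $|q|^2\le2n$. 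Taking $R_n$ large in terms of $n$ puts $q\in B_{R-1}$. Now the monotonicity formula for the Gaussian area (equivalently, monotonicity for minimal submanifolds in the conformal metric $e^{-|x|^2/(2n)}\delta_{ij}$, which has bounded geometry on $B_{2\sqrt{2n}}$ with constants depending only on $n$) gives a definite density lower bound at the smooth point $q$, so $\Vol(B_{\rho_n}(q)\cap\Sigma)\ge c_n\rho_n^{\,n}$ for some $\rho_n=\rho_n(n)$; as $|x|\le2\sqrt{2n}$ on that ball, $\int_{B_{R-1}\cap\Sigma}e^{-|x|^2/4}\ge e^{-2n}c_n\rho_n^{\,n}>0$, which closes the loop with $c_0(n)=\tfrac{1}{n+1}e^{-2n}c_n\rho_n^{\,n}$.

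\emph{Conclusion and main obstacle.} Choosing $R_n=R_n(n)$ so that $9C_nR^ne^{-(R-1)^2/4}<\tfrac12 c_0(n)$ for all $R\ge R_n$ (and $R_n$ large enough for Step 2), the variation $V=\eta\,e_j^\perp$ has strictly negative second variation, so $\Sigma$ is Gaussian unstable. The step I expect to be the real work is Step 2: making the weighted-area lower bound genuinely uniform in $\Sigma$ and in the codimension — this is exactly where the closest-point sphere comparison and the density lower bound from monotonicity are needed, and where $B_R\cap\partial\Sigma=\emptyset$ is used crucially to guarantee that $q$ is an interior point to which both tools apply — together with the (standard, but in this boundary setting slightly technical) Euclidean volume bound $\Vol(B_R\cap\Sigma)\le C_nR^n$, which should follow by localizing the usual argument from $(\Delta-\tfrac12\langle x,\nabla(\cdot)\rangle)|x|^2=2n-|x|^2$. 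Everything else is soft.
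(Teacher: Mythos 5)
Your core mechanism is the same as the paper's: the identity $L\,v^\perp=\tfrac12 v^\perp$ for translations (Theorem $5.2$ in \cite{CM1}; the paper repackages it as the Poincar\'e inequality of Proposition \ref{p:stable1}, you just plug in $\eta\,v^\perp$ directly), together with a radial cutoff whose gradient lives near $\partial B_R$ and is killed by the Gaussian weight. The computation in your Step 1 is correct, and your observation that $\tfrac{N-n}{N}\ge\tfrac{1}{n+1}$ for $N\ge n+1$ correctly keeps the codimension from entering the constant. Your Step 2 device --- locate an interior point $q\in\overline{B_{\sqrt{2n}}}\cap\Sigma$ by minimizing $|x|$ and use a density lower bound there --- is sound and in the same spirit as the paper's appeal to the maximum principle for $\cL|x|^2 = 2n-|x|^2$.

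The gap is the uniform volume bound $\Vol(B_R\cap\Sigma)\le C_n R^n$ with $C_n=C_n(n)$, which you call ``standard''; it is not, and I do not believe it is true. The shrinker equation gives the almost-monotonicity of Proposition \ref{l:areagrow},
\begin{align*}
\Bigl(1-\tfrac{2n}{r_2^2}\Bigr)\,\frac{\Vol(B_{r_2}\cap\Sigma)}{r_2^n}\le\frac{\Vol(B_{r_1}\cap\Sigma)}{r_1^n}\qquad\text{for }\sqrt{4+2n}\le r_1<r_2,
\end{align*}
but this controls $\Vol(B_R\cap\Sigma)$ only in terms of $\Vol(B_{r_1}\cap\Sigma)$ at some inner radius $r_1$, and nothing bounds $\Vol(B_{r_1}\cap\Sigma)$ by a constant depending only on $n$: the hypotheses put no cap on entropy, genus, number of sheets, etc. Your Step 2 gives a universal \emph{lower} bound for the main term, but the gradient term is bounded \emph{above} by (roughly) $e^{-(R-1)^2/4}\Vol(B_R\cap\Sigma)$, which is not universal; comparing a universal lower bound against a non-universal upper bound cannot produce an $R_n$ depending only on $n$. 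The paper circumvents this by noting that \emph{both} sides scale with the same quantity: the main term is $\gtrsim e^{-r_1^2/4}\,V(r_1)$ and, after using the growth lemma $V(r_2)\lesssim (r_2/r_1)^n\,V(r_1)$ to rewrite the gradient term, the factor $V(r_1)$ appears on both sides and cancels, leaving a contradiction between $e^{-r_1^2/4}$ and $4\,(1-2n/r_2^2)^{-1}(r_2/r_1)^n\,e^{-(r_2-1)^2/4}$ for $r_1=\sqrt{4+2n}$ fixed and $r_2$ large. Your argument becomes correct if you replace Step 2's universal constant by ``$\int_{B_{R-1}\cap\Sigma}\rho\ge e^{-r_1^2/4}\,\Vol(B_{r_1}\cap\Sigma)$'' and then invoke Proposition \ref{l:areagrow} to beat the gradient term; the density-lower-bound argument at $q$ is then only needed to know $\Vol(B_{r_1}\cap\Sigma)>0$, which already follows from $\Sigma$ meeting $\overline{B_{\sqrt{2n}}}$.
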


   There are three contrasts with the classical Euclidean Bernstein theorems.  First, we do not require completeness - instead, it is already possible to rule out any sufficient large piece.  Second, the theorem applies in all dimensions, instead of holding just up to $\RR^7$.
   Finally, the theorem applies in all codimension, while the classical Bernstein theorems do not hold already in codimension two since 
   complex submanifolds are area-minimizing, \cite{Fe}.

\section{Gaussian area}

We will next recall the variational characterization of shrinkers as Gaussian minimal submanifolds.  It will be useful to work with more general 
rectifiable varifolds, rather than restricting to smooth submanifolds. 
As in \cite{CM1}, define the Gaussian area, or $F$ functional, of an $n$-dimensional rectifiable varifold $\Sigma^n \subset \RR^N$ by
\begin{align}
	F(\Sigma) = \left(4 \, \pi \right)^{ - \frac{n}{2} } \, \int_{\Sigma} \e^{ - \frac{|x|^2}{4} } \, .
\end{align}
The constant $\left(4 \, \pi \right)^{ - \frac{n}{2} } $ makes the Gaussian area of an $n$-plane through the origin equal to one.  Critical points of $F$ are said to be $F$-stationary (see section $12$ in \cite{CM1}).  If $\Sigma$ is an immersed submanifold, then $F$-stationary is equivalent to the shrinker equation
\begin{align}	\label{e:seq}
	\bH = \frac{1}{2} \, x^{\perp} \, , 
\end{align}
where $\bH$ is the mean curvature vector and $x^{\perp}$ is the perpendicular part of the position vector field.{\footnote{See
Proposition $3.6$ in \cite{CM1} for hypersurfaces, cf. \cite{AHW,AS,LL}.}} More generally, $F$-stationary varifolds are weak solutions of \eqr{e:seq}.

The simplest shrinkers are an $n$-plane through the origin, where both $\bH$ and $x^{\perp}$ vanish identically, and an $n$-sphere
of radius $\sqrt{2n}$ centered at the origin.  These are the extreme cases of the generalized cylinders $\SS^{k}_{\sqrt{2k}} \times \RR^{n-k}$, for $k=0, \dots , n$.

\subsection{Volume bounds for $F$-stationary rectifiable varifolds}

The next proposition gives local bounds on volume growth for shrinkers; the argument is  adapted from global results \cite{CZ, CaZ}   for complete shrinkers.

\begin{Pro}	\label{l:areagrow}
If $\Sigma^n \subset B_{\bar{R}} \subset \RR^N$ is a proper $F$-stationary rectifiable varifold and $\sqrt{4+2n} \leq r_1 < r_2 < \bar{R}$, then
\begin{align}	\label{e:evg}
	\left( 1 - \frac{2n}{r_2^2} \right) \, \frac{\Vol (B_{r_2} \cap \Sigma)}{r_2^n} & \leq \frac{\Vol (B_{r_1} \cap \Sigma)}{r_1^n}  \, , \\
	\int_{B_{r} \cap \Sigma} |\bH|^2 &\leq \frac{n}{2} \, 
	\Vol (B_{r} \cap \Sigma) {\text{ at regular values of }} |x| \, .	\label{e:H2bound}
\end{align}
\end{Pro}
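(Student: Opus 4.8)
The plan is to derive both estimates from the first variation formula applied to well-chosen vector fields, exactly as one does for the standard monotonicity formula for minimal submanifolds, but now twisted by the Gaussian weight $\e^{-|x|^2/4}$. The starting point is the $F$-stationarity of $\Sigma$, which says that for any compactly supported vector field $X$ on $B_{\bar R}$ we have $\int_\Sigma \operatorname{div}_\Sigma X - \frac12 \langle X, x^\perp\rangle = 0$ (equivalently, $\int_\Sigma \operatorname{div}_\Sigma X = \int_\Sigma \langle X, \bH\rangle$, using the weak form of $\bH = \tfrac12 x^\perp$). In the varifold setting one should state this as the vanishing of the weighted first variation and then use a cutoff/approximation argument to justify plugging in the non-compactly-supported fields below; since $\Sigma$ is proper in $B_{\bar R}$ and we only integrate up to $r_2<\bar R$, this is routine.

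For \eqref{e:H2bound}, first I would take $X = x$, the position vector field. Then $\operatorname{div}_\Sigma x = n$ and $\langle x, \bH\rangle = \frac12 \langle x, x^\perp\rangle = \frac12 |x^\perp|^2 = 2|\bH|^2$ on the regular set, using the shrinker equation. Localizing with a cutoff supported in $B_r$ (or, cleanly, integrating the pointwise identity over $B_r\cap\Sigma$ for a regular value $r$ of $|x|$ and controlling the boundary term by the coarea formula and the fact that $x$ points outward so the boundary flux has a sign) gives $2\int_{B_r\cap\Sigma}|\bH|^2 \le n\,\Vol(B_r\cap\Sigma)$, which is \eqref{e:H2bound}. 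The sign of the boundary term is the only thing to check: on $\partial(B_r\cap\Sigma)$ one gets $\langle x, \eta\rangle$ with $\eta$ the outward conormal, and $\langle x,\eta\rangle \le |x^{\top}| \le r$, contributing with the correct sign so that dropping it only helps the inequality.

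For the volume ratio estimate \eqref{e:evg}, the standard trick is to test first variation against a radial field of the form $X = \phi(|x|)\, x$ and extract a differential inequality for $V(r) := \Vol(B_r\cap\Sigma)$. Computing $\operatorname{div}_\Sigma(\phi(|x|)x) = n\phi + |x^{\top}|^2 \phi'/|x|$ and $\langle \phi x, \bH\rangle = \tfrac{\phi}{2}|x^\perp|^2$, and writing $|x^\perp|^2 = |x|^2 - |x^{\top}|^2$, one arrives (after choosing $\phi$ to be an approximation of $\mathbf 1_{[0,r]}$ and using coarea) at an inequality of the shape $\frac{d}{dr}\left(\frac{V(r)}{r^n}\right) \ge -\frac{c_n}{r^{?}}(\text{something})$; the Gaussian term $\tfrac12|x|^2$ is favorable (it adds a positive multiple of $V$), while the error term one must absorb is exactly $\int_{B_r\cap\Sigma} |x^{\top}|^2/|x| \cdot(\ldots)$, bounded crudely by $r\,V(r)$ or better. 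Keeping careful track of constants, and using $r\ge\sqrt{4+2n}$ so that the weighted term dominates, yields the monotonicity of $\left(1 - \tfrac{2n}{r^2}\right)\frac{V(r)}{r^n}$, i.e.\ \eqref{e:evg}; alternatively one integrates the resulting ODE directly between $r_1$ and $r_2$.

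The main obstacle is not conceptual but bookkeeping: getting the precise constant $2n$ and the precise threshold $\sqrt{4+2n}$ requires carefully balancing the three contributions — the dimensional term $n\phi$, the Gaussian term $\tfrac{\phi}{2}|x|^2$, and the error $\phi' |x^{\top}|^2/|x|$ — rather than throwing them away. The cleanest route is probably to work with the function $e^{|x|^2/4} r^{-n} V(r)$-type quantity as in the global shrinker monotonicity of \cite{CZ, CaZ} and then estimate the exponential from below by $1-\tfrac{2n}{r^2}$ in the relevant range; this reduces \eqref{e:evg} to a clean manipulation of their global identity restricted to $B_{\bar R}$. I should also make sure the varifold first-variation manipulations and the coarea/regular-value statements in \eqref{e:H2bound} are phrased so as to be valid for rectifiable $F$-stationary varifolds and not just smooth submanifolds, but this is standard.
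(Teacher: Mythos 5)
Your approach is essentially the paper's: test the first variation against the radial fields $x$ and $\phi(|x|)x$, use the shrinker identity $|x^\perp|^2 = 4|\bH|^2$ together with the coarea formula, and absorb the error using $r \ge \sqrt{4+2n}$; your derivation of \eqref{e:H2bound} is complete. For \eqref{e:evg}, the bookkeeping you punt on is exactly what the paper supplies: track $T(r) = \int_{B_r\cap\Sigma}|\bH|^2$ as a second absolutely continuous function, observe that the divergence theorem applied to $\Delta_\Sigma |x|^2 = 2n - 4|\bH|^2$ together with $|x^T|^2 = |x|^2 - 4|\bH|^2$ gives the exact identity $r\,V' - n\,V = \tfrac{4}{r}\,T' - 2\,T$, multiply by $r^{-n-1}$ so the left side becomes $(r^{-n}V)'$, integrate from $r_1$ to $r_2$, integrate by parts in $T'$, and then use $T \le \tfrac{n}{2}V$ and $r_1^2 \ge 2n+4$ to make the leftover $T$-integral nonpositive — that is the computation you would still need to write out to turn your sketch into a proof.
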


\begin{proof}
Recall that $\nabla^T |x| = \frac{|x^T|}{|x|}$. 
Define  $V(r)$ to be the volume of $B_r \cap \Sigma$ and $T(r) = \int_{B_r \cap \Sigma} |\bH|^2$.  The co-area formula ($3.2.22$ in \cite{Fe}) gives that $V$ and $T$ are absolutely continuous functions where the derivatives are given at regular values of $|x|$ (almost everywhere) by 
\begin{align}
	V' (r) &= \int_{\partial B_r \cap \Sigma} \frac{1}{|\nabla^T \, |x||} =  r\, \int_{\partial B_r \cap \Sigma} \frac{1}{|x^T|} \, , \\
	T'(r) &= r\, \int_{\partial B_r \cap \Sigma} \frac{|\bH|^2}{|x^T|} \, .
\end{align}
On the other hand, the shrinker equation gives that $\Delta_{\Sigma} \, |x|^2 = 2\,n - 4\, |\bH|^2$, so the divergence theorem gives
at regular values of $|x|$ that
\begin{align}
	2n\, V(r) - 4\, T(r) &= \int_{B_r \cap \Sigma} \Delta_{\Sigma} \, |x|^2 = 2 \, \int_{\partial B_r \cap \Sigma} |x^T| =  2 \, \int_{\partial B_r \cap \Sigma}\frac{ |x^T|^2}{|x^T|} \notag \\
	&= 2 \, \int_{\partial B_r \cap \Sigma}\frac{|x|^2 -4\, |\bH|^2}{|x^T|} = 2\, r \, V'(r) - \frac{8}{r} \, T'(r) \, .
\end{align}
One consequence of the first two equalities is that $T(r) \leq \frac{n}{2} \, V(r)$, giving the second claim \eqr{e:H2bound}.
Using the full string of equalities gives 
\begin{align}
	r \, V' - n \, V = \frac{4}{r} \, T' -2 \, T \, .
\end{align}
In particular, we get that
\begin{align}
	\left( r^{-n} \, V \right)' =  r^{-n-1} \left( r\, V' - n \, V \right)  = r^{-n-1} \left( \frac{4}{r} \, T' - 2 \, T    \right) \, .
\end{align}
Integrating this from $r_1$ to $r_2$ and then integrating by parts on the right gives
\begin{align}
	\frac{V(r_2)}{r_2^n} - \frac{V(r_1)}{r_1^n} &= \int_{r_1}^{r_2} \left( 4 \, r^{-n-2} \, T' -  2\, r^{-n-1}  \, T \right) \, dr \notag \\
	&= 4\, \frac{T(r_2)}{r_2^{n+2}}  - 4\, \frac{T(r_1)}{r_1^{n+2}}  + \int_{r_1}^{r_2} \left( 4 \, (n+2) \, r^{-n-3}   -  2\, r^{-n-1}  \right) \, T \, dr \\
	&\leq 2n\, \frac{V(r_2)}{r_2^{n+2}}  - 4\, \frac{T(r_1)}{r_1^{n+2}}  + 2\, \int_{r_1}^{r_2} \left( 2 \, (n+2)     -  r^2  \right) \, \frac{T}{r^{n+3}} \, dr \, , \notag
\end{align}
where the last inequality used that $T(r) \leq \frac{n}{2} \, V(r)$. Since $r_1^2 \geq 2n + 4$, the last term on the right is non-positive, so we get the claim \eqr{e:evg} when $\Sigma$ is an immersed shrinker.
By standard arguments (sections 17--18 in \cite{S}), this extends to $F$-stationary $n$-dimensional
rectifiable varifolds in $\RR^N$.
\end{proof}

\subsection{Singularities}

An   $F$-stationary rectifiable varifold $\Sigma$  can be decomposed into  an open set $\Reg (\Sigma)$ of regular points,
where  $\Sigma$ is an embedded submanifold,  together with a closed singular set $\Sing (\Sigma) = \Sigma \setminus \Reg (\Sigma)$.

 If $\Sigma^n \subset \RR^{n+1}$ is  $F$-minimizing, then it is also area-minimizing for the  Gaussian metric (see, e.g., page 770 in \cite{CM1}).  Therefore, the 
 regularity theory for area-minimizers (Theorem $4$ in \cite{SS} or section $18$ in \cite{Wi}) implies that 
\begin{align}
	\dim_H \Sing (\Sigma) \leq n-7 \, ,
\end{align}
where  $	\dim_H$  is Hausdorff dimension.  By Theorem $5.8$ in \cite{CN}, the same bound holds for Minkowski
dimension $\dim_M$.
Recall that the $\rho$-covering number $\cov_{\rho} (S)$ of a set $S$ is the number of closed balls of radius $\rho > 0$ in a minimal covering{\footnote{Every ball in the covering has the same radius $\rho$; this differs from coverings used to compute Hausdorff measure and dimension, where the balls can have different radii.}}
 of $S$.
A set 
  $S \subset \RR^N$ has Minkowski dimension less than $k$ if 
  \begin{align}	\label{e:minkd}
	\lim_{\rho \to 0} \, \rho^{k} \, \cov_{\rho} (S) = 0 \, ,
\end{align}
 Since each ball has Euclidean volume comparable to $\rho^N$, an immediate consequence of \eqr{e:minkd} is that
  \begin{align}
	\lim_{\rho \to 0} \, \rho^{k-N} \, \Vol_{\RR^N} (T_{\rho} (S) ) = 0 \, ,
\end{align}
where $T_{\rho} (S)$ is the Euclidean tubular neighborhood of radius $\rho > 0$ of $S$.   
A bound on $\dim_M$ is stronger than a bound on the Hausdorff dimension $\dim_H$ and, in particular, 
   implies capacity estimates and removability of singularities for elliptic equations.

\vskip1mm
The next  lemma constructs good cutoff functions on the regular part
when the Minkowski dimension of the singular set is less than $n-2$.

\begin{Lem}	\label{l:singcut} 
Let $\Sigma^n \subset \RR^N$ be a proper $F$-stationary rectifiable varifold in $B_{R}$ with 
\begin{align}	\label{e:dmbd}
	\dim_M \Sing (\Sigma) < n-2 \, .
\end{align}
Given $\epsilon > 0$,   there is a function $\phi:\Sigma \to [0,1]$  that  is zero on $\overline{B_{R-1}} \cap \Sing (\Sigma)$ and
\begin{align}	\label{e:singcut}
 \int_{\Sigma} |\nabla \phi|^2 \, \e^{ - \frac{|x|^2}{4}} +  \int_{\Sigma} (1- \phi^2) \, \e^{ - \frac{|x|^2}{4}}  &\leq \epsilon  \, .
\end{align}
\end{Lem}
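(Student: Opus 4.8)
The plan is to build $\phi$ as a minimum of a controlled number of Euclidean bump functions supported in small balls covering $S:=\overline{B_{R-1}}\cap\Sing(\Sigma)$, and to estimate the two terms of \eqr{e:singcut} by a covering argument: the hypothesis \eqr{e:dmbd} says, via \eqr{e:minkd}, exactly that $\cov_\rho(S)\,\rho^{\,n-2}\to0$ as $\rho\to0$, and this is what makes a plain cutoff sufficient.

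First I would fix the scale. The set $S$ is a compact subset of $B_R$, since $\Sigma$ is proper in $B_R$ and $\Sing(\Sigma)$ is relatively closed in $\Sigma$, and $\dim_M S\le\dim_M\Sing(\Sigma)<n-2$. For $\rho\in(0,\tfrac18)$ let $B_\rho(q_1),\dots,B_\rho(q_K)$, $K=\cov_\rho(S)$, be a minimal cover of $S$; every ball in a minimal cover meets $S$, so pick $p_j\in B_\rho(q_j)\cap S\subseteq\overline{B_{R-1}}$, and note that the enlarged balls $B_{2\rho}(p_j)$ still cover $S$. Fix $\eta\in C^\infty([0,\infty);[0,1])$ with $\eta\equiv0$ on $[0,1]$, $\eta\equiv1$ on $[2,\infty)$, $|\eta'|\le2$, and set $\phi_j(x):=\eta\big(|x-p_j|/(2\rho)\big)$ and $\phi:=\min_{1\le j\le K}\phi_j$. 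Then $\phi:\Sigma\to[0,1]$ is Lipschitz, equals $1$ outside $\bigcup_jB_{4\rho}(p_j)$, and vanishes on $\bigcup_jB_{2\rho}(p_j)\supseteq S$; in particular $\phi\equiv0$ on $\overline{B_{R-1}}\cap\Sing(\Sigma)$. (If one insists on a smooth $\phi$, replace the minimum by a smoothed minimum; none of the estimates below change.)

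Both terms of \eqr{e:singcut} reduce to a local volume bound. Since $|\bH|=\tfrac12|x^\perp|\le\tfrac R2$ on $\Sigma\subset B_R$, the monotonicity formula for rectifiable varifolds with bounded generalized mean curvature (section 17 in \cite{S}), applied on the balls $B_{1/2}(p_j)\subset B_R$, produces a constant $C_1$, depending only on $n$, $R$, and $\Vol(B_{R-1/2}\cap\Sigma)$ (which is finite because $\overline{B_{R-1/2}}$ is a compact subset of $B_R$; cf.\ Proposition~\ref{l:areagrow}), such that $\Vol(B_s(p_j)\cap\Sigma)\le C_1 s^n$ for $0<s\le\tfrac12$. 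Using $|\nabla^T|x-p_j||\le1$ one has $|\nabla\phi_j|\le\rho^{-1}$ on $B_{4\rho}(p_j)$ and $\nabla\phi_j\equiv0$ elsewhere; moreover at almost every point $\phi$ is differentiable and $\nabla\phi$ coincides with $\nabla\phi_j$ for some $j$, so $|\nabla\phi|^2\le\sum_j|\nabla\phi_j|^2$ pointwise. Together with $\e^{-|x|^2/4}\le1$ these give
\[
\int_\Sigma|\nabla\phi|^2\,\e^{-\frac{|x|^2}{4}}\ \le\ \sum_{j=1}^K\frac1{\rho^2}\,\Vol\big(B_{4\rho}(p_j)\cap\Sigma\big)\ \le\ 4^n\,C_1\,\cov_\rho(S)\,\rho^{\,n-2}\, ,
\]
and, since $0\le1-\phi^2\le2\sum_j(1-\phi_j)$ with $1-\phi_j$ supported in $B_{4\rho}(p_j)$,
\[
\int_\Sigma(1-\phi^2)\,\e^{-\frac{|x|^2}{4}}\ \le\ 2\sum_{j=1}^K\Vol\big(B_{4\rho}(p_j)\cap\Sigma\big)\ \le\ 2\cdot4^n\,C_1\,\cov_\rho(S)\,\rho^{\,n}\, .
\]
By \eqr{e:dmbd} and \eqr{e:minkd} the quantity $\cov_\rho(S)\,\rho^{\,n-2}$ tends to $0$ as $\rho\to0$, hence so do both right-hand sides; choosing $\rho$ small enough that their sum is at most $\epsilon$ completes the proof.

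Beyond the bookkeeping, two ingredients are essential. The first is that the scale-$\rho$ cutoff contributes the power $\rho^{\,n-2}$ per ball to the gradient term, whose summability against $\cov_\rho(S)$ is precisely the hypothesis $\dim_M\Sing(\Sigma)<n-2$; if the singular set were of dimension exactly $n-2$, this construction would break and one would be forced into a much more delicate (logarithmic or multi-scale) cutoff. The second is the local volume bound $\Vol(B_s(p)\cap\Sigma)\le C\,s^n$: this is where $F$-stationarity, rather than the bare varifold structure, enters, through $\bH=\tfrac12x^\perp$ together with $|x|\le R$. Accordingly, I would expect the only real care to go into the precise form of the monotonicity formula with bounded mean curvature; the remainder is a routine covering estimate.
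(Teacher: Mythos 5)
Your proposal is correct and follows essentially the same approach as the paper: cover $S=\overline{B_{R-1}}\cap\Sing(\Sigma)$ at a scale $\rho$ where $\cov_\rho(S)\rho^{n-2}$ is small, build a cutoff vanishing on a $\rho$-neighborhood of $S$ with $|\nabla\phi|\lesssim\rho^{-1}$, and control both terms by the local volume bound $\Vol(B_s(p)\cap\Sigma)\lesssim s^n$ from monotonicity with bounded mean curvature. The only cosmetic difference is that the paper uses a single piecewise-linear function of the Euclidean distance $d_S$ instead of your minimum of smooth bumps; the resulting estimates are identical.
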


\begin{proof}
Set $S = \overline{B_{R-1}} \cap \Sing (\Sigma)$.  Let $\delta > 0$ be a small constant to be chosen below.  The assumption
\eqr{e:dmbd} implies that there exists $\rho < \frac{1}{3R}$ so that
\begin{align}
	\cov_{\rho} (S) < \delta \, \rho^{2-n} \, .
\end{align}
Let $\{ B_{\rho} (x_i)\}$, $i=1 , \dots , m$ with $m < \delta \, \rho^{2-n}$ be a minimal covering of $S$ by balls of radius $\rho$.  In particular, we have that every $x_i \in \overline{B_{R-1 + R^{-1}}}$.  Let $d_S (x)$ be the Euclidean distance from $x$ to $S$ and define the function $\phi$ by
\begin{align}
	\phi (x) = \begin{cases}
	0 &{\text{ if }} d_S(x) \leq \rho \\
	\rho^{-1}  \, d_{S} (x) - 1 &{\text{ if }} {\rho} < d_S(x) < {2}{\rho} \\
	1 &{\text{ if }}    2\, \rho \leq d_{S} (x) 
	\end{cases}
	\, .
\end{align}
Note that $\phi$ maps to $[0,1]$ and vanishes on $S$.  Moreover, both $(1-\phi^2)$ and $|\nabla \phi|$ have support in the tubular neighborhood $T_{2\rho}(S)$ of radius $2\rho$ about $S$.
Since $|\nabla \phi| \leq \rho^{-1}$, the exponential weight is at most one, and $T_{2\rho}(S)$ is contained in the union of the balls $B_{3\rho}(x_i)$,  it follows that
\begin{align}	 	\label{e:volTbd}
 \int  |\nabla \phi|^2 \, \e^{ - \frac{|x|^2}{4}} +  \int  (1- \phi^2) \, \e^{ - \frac{|x|^2}{4}}  &\leq 
 \left( 1 + \rho^{-2} \right) \, \Vol \left( T_{2\rho}(S) \cap \Sigma \right) \notag \\
 &\leq 2\, \rho^{-2} \, \sum_i \Vol \left( B_{3\rho} (x_i) \cap \Sigma \right) \\
 &< 2\, \delta \, \rho^{-n} \, \max_i \, \Vol \left( B_{3\rho} (x_i) \cap \Sigma \right) \, .  \notag
\end{align}
The shrinker equation implies that $\Sigma$ has mean curvature weakly in $L^{\infty}$ and bounded by $\frac{1}{2R}$ on $B_R$. Therefore, since $3\rho \leq \frac{1}{R}$ and 
$B_{\frac{1}{R}} (x_i) \subset B_R$ for every $i$, 
  monotonicity of volume with bounded mean curvature (Lemma $1.18$ in \cite{CM2}; cf. section $18$ in \cite{S}) gives a constant $c$ so that
  \begin{align}
  	\Vol \left( B_{3\rho} (x_i) \cap \Sigma \right) \leq c \, \left( \frac{ \rho}{ R^{-1} } \right)^n  \, \Vol (B_{ \frac{1}{R} } (x_i) \cap \Sigma) \leq
	c\, \rho^n \, R^n \, \Vol (B_R \cap \Sigma) \, .
  \end{align}
Using this in \eqr{e:volTbd} gives
\begin{align}	 	\label{e:volTbd2}
 \int  |\nabla \phi|^2 \, \e^{ - \frac{|x|^2}{4}} +  \int  (1- \phi^2) \, \e^{ - \frac{|x|^2}{4}}  & < 2\, c\, \delta  R^n \,  \Vol (B_R \cap \Sigma) \, .
 \end{align}
 Since $R$ here is fixed and $\Sigma$ is proper, the right-hand side of \eqr{e:volTbd2} is bounded by $C' \, \delta$ for a fixed constant $C'$ independent of $\delta$.   The lemma follows by choosing $\delta > 0$ sufficiently small depending on $\epsilon$ and $C'$.
\end{proof}

 \section{A strong Bernstein theorem for shrinkers}
 
  The classical Bernstein theorem for minimal surfaces gives that an entire minimal graph in $\RR^3$ must be flat.  This was extended to area-minimizing hypersurfaces up to $\RR^7$ and shown to be false in $\RR^8$    (see, e.g., \cite{S, CM2}).  
        The second variation of Gaussian area was introduced in 
 \cite{CM1}  and used to show that there are no complete Gaussian stable hypersurfaces{\footnote{This required that there is at most polynomial volume growth; by \cite{DX}, this holds when $\Sigma$ is proper. }}.  
Wang used the second variation and a calibration argument to prove a  Bernstein theorem for entire graphical shrinkers
  in 
 \cite{W1}.

   In this section, we will prove the strong Bernstein theorem, Theorem \ref{t:stable}.  In fact, we show the following more
   general theorem that allows boundary and a small set of singularities:
   
   \begin{Thm}	\label{t:stable2}
If $\Sigma^n \subset B_R \subset  \RR^N$ is a proper $F$-stationary rectifiable varifold with $R \geq R_n$  and $\dim_M \Sing (\Sigma) < (n-2)$, then $\Sigma$ is Gaussian unstable.
\end{Thm}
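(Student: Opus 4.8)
The plan is to show that if $R$ is large enough and $\Sigma$ were Gaussian stable, we could plug a well-chosen test function into the second variation inequality and reach a contradiction. The natural test function is a constant — or rather, the constant cut off away from both $\partial\Sigma$ and $\Sing(\Sigma)$. Recall that Gaussian stability means $\int_\Sigma (|A|^2 + \tfrac12 + \dots)\phi^2\,e^{-|x|^2/4} \le \int_\Sigma |\nabla\phi|^2\,e^{-|x|^2/4}$ for all compactly supported $\phi$ on $\Reg(\Sigma)$ (in codimension one; in higher codimension the stability operator has the analogous form with $|A|^2$ replaced by the appropriate normal-bundle curvature term, still nonnegative plus the $\tfrac12$). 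The point is that the zeroth-order term on the left is bounded below by $\tfrac12\int_\Sigma \phi^2\,e^{-|x|^2/4}$, so stability forces
\begin{align}
	\tfrac12 \int_\Sigma \phi^2 \, \e^{-\frac{|x|^2}{4}} \le \int_\Sigma |\nabla \phi|^2 \, \e^{-\frac{|x|^2}{4}} \, .
\end{align}
So I just need a cutoff $\phi$ for which the right side is much smaller than the left — i.e., a function that is $1$ on most of $B_R \cap \Sigma$ (in Gaussian measure) but has tiny Dirichlet energy.

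**Next I would construct $\phi$ as a product of two cutoffs.** First, use Lemma~\ref{l:singcut}: for any $\epsilon_1 > 0$ it gives $\phi_1:\Sigma\to[0,1]$ vanishing on $\overline{B_{R-1}}\cap\Sing(\Sigma)$ with $\int_\Sigma |\nabla\phi_1|^2\,e^{-|x|^2/4} + \int_\Sigma (1-\phi_1^2)\,e^{-|x|^2/4} \le \epsilon_1$; note Lemma~\ref{l:singcut} applies because $\dim_M\Sing(\Sigma) < n-2$ is exactly hypothesis \eqref{e:dmbd}. Second, take a radial logarithmic cutoff $\phi_2 = \phi_2(|x|)$ that is $1$ on $B_{\sqrt R}$ and $0$ outside $B_{R-1}$, interpolating so that $|\nabla \phi_2| \le |\nabla^T|x||\cdot|\phi_2'| \le 2/(|x|\log R)$ on the annulus — the standard trick so that $\int_\Sigma |\nabla\phi_2|^2\,e^{-|x|^2/4}$ is controlled by $(\log R)^{-2}$ times a volume-type quantity on $B_{R-1}\cap\Sigma$, which is finite and in fact bounded via Proposition~\ref{l:areagrow} (the Gaussian weight plus the polynomial volume bound \eqref{e:evg} make $\int_{B_{R-1}\cap\Sigma} e^{-|x|^2/4}$ finite, uniformly once we are past the $\sqrt R$ radius where the weight kicks in; here is where $R \ge R_n$ enters quantitatively). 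Set $\phi = \phi_1 \phi_2$. Then $|\nabla\phi|^2 \le 2\phi_2^2|\nabla\phi_1|^2 + 2\phi_1^2|\nabla\phi_2|^2 \le 2|\nabla\phi_1|^2 + 2|\nabla\phi_2|^2$, so the right side of the stability inequality is $\le 2\epsilon_1 + C(\log R)^{-2}$. Meanwhile $\int_\Sigma \phi^2 e^{-|x|^2/4} \ge \int_{B_{\sqrt R}\cap\Sigma} \phi_1^2 e^{-|x|^2/4} \ge \int_{B_{\sqrt R}\cap\Sigma} e^{-|x|^2/4} - \epsilon_1$, which by the volume lower bound is bounded \emph{below} by a fixed positive constant $c_n > 0$ as soon as $R \ge R_n$ (since $B_R\cap\Sigma\ne\emptyset$ and, by the monotonicity in Proposition~\ref{l:areagrow} together with a clearing-out/lower density argument, $\Sigma$ carries a definite amount of Gaussian area near where it meets a fixed ball). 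Choosing $\epsilon_1$ small and $R_n$ large enough that $2\epsilon_1 + C(\log R)^{-2} < \tfrac{c_n}{2}$ contradicts the displayed stability inequality.

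**The main obstacle** is the lower bound $\int_{B_{\sqrt R}\cap\Sigma} e^{-|x|^2/4} \ge c_n > 0$, uniform over all such $\Sigma$. One only knows $B_R\cap\Sigma\ne\emptyset$, so a priori the intersection point could sit near $\partial B_R$ where the Gaussian weight is exponentially small — that is exactly why one needs $R\ge R_n$ rather than an arbitrary radius, and why the statement is about a \emph{fixed} ball $B_{R_n}$ in Theorem~\ref{c:connecting}. The resolution should be: $F$-stationarity plus the monotonicity formula gives that the Gaussian density $\Theta(\Sigma)$ of $\Sigma$ at the origin (or the appropriate "entropy at scale") is bounded below by $1$ whenever $\Sigma$ meets a small enough ball, because the Gaussian-area monotonicity for shrinkers (the Huisken-type monotonicity underlying \eqref{e:evg}) propagates a lower density bound from any regular point of $\Sigma$ inward; concretely, if $\Sigma$ meets $B_{\sqrt R}$ one gets $\int_{B_{\sqrt R}\cap\Sigma}e^{-|x|^2/4}$ bounded below independent of $R$, and if $\Sigma$ only meets the annulus $B_R\setminus B_{\sqrt R}$ one instead needs the genuinely delicate point — but in fact $B_{R_n}\cap\Sigma\ne\emptyset$ is the hypothesis, so $\Sigma$ does meet a fixed ball. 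I would isolate this as a separate lemma: \emph{there is $c_n>0$ so that any proper $F$-stationary varifold with $B_{R_n}\cap\Sigma\ne\emptyset$ has $\int_\Sigma e^{-|x|^2/4}\ge c_n$}, proved by applying the monotonicity \eqref{e:evg} (together with standard density lower bounds at regular points, sections~17--18 in \cite{S}) starting from a regular point in $B_{R_n}\cap\Sigma$ and noting the Gaussian weight is bounded below on $B_{R_n}$. Everything else — the two cutoffs, the Cauchy–Schwarz split, the $(\log R)^{-2}$ estimate — is routine.
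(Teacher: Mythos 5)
Your approach is morally the same as the paper's --- a radial-cutoff-times-singular-cutoff test function is fed into a stability-type Poincar\'e inequality and played off against the volume monotonicity of Proposition~\ref{l:areagrow} --- but as written there are two genuine gaps.

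First, the inequality $\tfrac12\int\phi^2\,\e^{-|x|^2/4}\le\int|\nabla\phi|^2\,\e^{-|x|^2/4}$ is immediate only for hypersurfaces, where one plugs $\eta=\phi\,\nn$ into the stability inequality and discards the nonnegative $|A|^2\phi^2$ term. The theorem, however, is stated in $\RR^N$, and in codimension $\geq 2$ the substitution $\eta=\phi\mu$ for a unit normal section $\mu$ does not decouple: the zeroth-order term $\langle A_{ij},\phi\mu\rangle A_{ij}$ is not pointwise a nonnegative multiple of $\mu$, and $\cL(\phi\mu)$ picks up $\cL\mu$ terms you cannot discard. This is exactly what the paper's Proposition~\ref{p:stable1} is for: testing against $\phi\, e_i^{\perp}$ for a Euclidean orthonormal frame $e_i$, using the exact identity $L(e_i^{\perp})=\tfrac12 e_i^{\perp}$ from \cite{CM1}, and summing over $i$ so that the cross terms collapse via $\sum_i|e_i^\perp|^2 = N-n$. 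Your proof needs that argument (or an equivalent) rather than the remark that the curvature term is ``still nonnegative.''

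Second, and more structurally, the target inequality $2\epsilon_1+C(\log R)^{-2}<c_n/2$ cannot be closed as stated because $C$ is \emph{not} universal. Your estimate bounds $\int|\nabla\phi_2|^2\,\e^{-|x|^2/4}$ by a Gaussian-weighted volume of $\Sigma$ over the annulus, and Proposition~\ref{l:areagrow} controls that only in terms of $\Vol(B_{r_1}\cap\Sigma)$ --- a quantity which is finite for each $\Sigma$ but has no uniform upper bound over all proper $F$-stationary varifolds in $B_R$. So no fixed $R_n$ makes $2\epsilon_1+C(\Sigma)(\log R)^{-2}<c_n/2$ fail simultaneously for all admissible $\Sigma$. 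The lower-bound lemma $\int_\Sigma\e^{-|x|^2/4}\geq c_n$ that you flag as the ``main obstacle'' is in fact a red herring and does not repair this, since $c_n$ is a floor while $C(\Sigma)$ has no ceiling. The fix --- and what the paper actually does --- is to \emph{not} split into universal constants: both sides of the stability inequality get written in terms of $\Vol(B_{r_1}\cap\Sigma)$ and $\Vol(B_{r_2}\cap\Sigma)$, yielding (after $\epsilon\to 0$) the estimate $\e^{-r_1^2/4}\Vol(B_{r_1}\cap\Sigma)\leq 4\,\e^{-(r_2-1)^2/4}\Vol(B_{r_2}\cap\Sigma)$, and then Proposition~\ref{l:areagrow} converts the right side back to $r_2^n\,\Vol(B_{r_1}\cap\Sigma)/r_1^n$ up to a factor, so the $\Sigma$-dependent volume cancels and the remaining purely numerical inequality $\e^{-r_1^2/4}\lesssim \e^{-(r_2-1)^2/4}r_2^n$ fails once $r_2$ is large enough. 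Once you track $\Vol(B_{r_1}\cap\Sigma)$ this way, the logarithmic cutoff buys you nothing; a linear cutoff from $r_2-1$ to $r_2$ suffices because the Gaussian weight $\e^{-(r_2-1)^2/4}$ already overwhelms the polynomial volume growth.
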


Theorem \ref{t:stable2} has
Theorem \ref{t:stable} as an immediate consequence when the singular set is empty.  We will need this more general version to prove 
the strong Frankel theorem in high dimensions where Gaussian minimizers can have singularities.

\subsection{Second variation and stability}

The second variation operator $L$ for Gaussian area from \cite{CM1} (equation ($4.13$) in \cite{CM1} for hypersurfaces, Theorem $4.1$ in \cite{AHW}, cf. \cite{AS,LL,CM5,CM6} for higher codimension) is given on normal vector fields $v$ by
\begin{align}
	L\,v = \cL\,v + \langle A_{ij},v\rangle\,A_{ij} + \frac{1}{2}\,v \, .
\end{align}
Here $\cL\,v=\Delta^{\perp}\,v-\frac{1}{2}\,\nabla^{\perp} _{x^T}v$, $\perp$ denotes the component normal to $\Sigma$, $A$ is the second fundamental form, and $A_{ij}=A(f_i,f_j)$, where $f_i$ is an orthonormal basis for the tangent space of $\Sigma$ at the given point.  
When $\Sigma$ is a hypersurface with unit normal $\nn$, then $L = \cL + |A|^2 + \frac{1}{2}$.

An $F$-stationary rectifiable varifold $\Sigma^n \subset \RR^N$  is said to be Gaussian stable if 
\begin{align}	\label{e:stabineq}
	  \int_{\Sigma} \langle \eta, \, L\, \eta \rangle\, \e^{ - \frac{|x|^2}{4}} \leq 0 
\end{align}
for every normal vector field $\eta$ with compact support on   $\Reg (\Sigma )$.  The inequality \eqr{e:stabineq} is known as the stability inequality.

\vskip1mm
The next proposition gives a uniform Poincar\'e inequality on the regular part  for Gaussian stable $F$-stationary rectifiable varifolds:

\begin{Pro}	\label{p:stable1}
If $\Sigma^n \subset \RR^N$ is a proper Gaussian stable $F$-stationary rectifiable varifold, then  every function $w$ with compact support
on $\Reg ( \Sigma )$ satisfies
\begin{align}	\label{e:12stab}
	\int_{\Sigma} w^2 \, \e^{ - \frac{|x|^2}{4}} \leq 2 \, \int_{\Sigma} |\nabla w|^2 \e^{ - \frac{|x|^2}{4}} \, .
\end{align}
\end{Pro}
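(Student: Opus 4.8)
The plan is to feed a well-chosen family of compactly supported normal vector fields into the stability inequality \eqr{e:stabineq} and integrate by parts, using that the Gaussian weight $\e^{-\frac{|x|^2}{4}}$ is precisely the one that makes the drift operator $\cL$ self-adjoint. When $\Sigma$ is a hypersurface it suffices to take the single field $\eta = w\,\nn$; to handle arbitrary codimension, where the normal bundle may have no global unit section, I would instead fix a constant orthonormal frame $e_1,\dots,e_N$ of $\RR^N$ and, given $w$ with compact support on $\Reg(\Sigma)$, test with the normal fields $\eta_\alpha = w\,(e_\alpha)^{\perp}$, where $(e_\alpha)^{\perp}$ is the component of $e_\alpha$ normal to $\Sigma$.

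First I would record the pointwise identities on $\Reg(\Sigma)$. Differentiating $e_\alpha = (e_\alpha)^{T} + (e_\alpha)^{\perp}$ and taking normal parts gives the Weingarten relation $\nabla^{\perp}_{f_i}(e_\alpha)^{\perp} = -A(f_i,(e_\alpha)^{T})$, so $\nabla^{\perp}_{f_i}\eta_\alpha = (f_i w)\,(e_\alpha)^{\perp} - w\,A(f_i,(e_\alpha)^{T})$. Summing over the frame and using $\sum_\alpha \langle e_\alpha,u\rangle\,\langle e_\alpha,v\rangle = \langle u,v\rangle$ gives $\sum_\alpha |(e_\alpha)^{\perp}|^2 = N-n$, hence $\sum_\alpha |\eta_\alpha|^2 = (N-n)\,w^2$; the cross terms cancel because each $A_{ij}$ is normal while the $f_k$ are tangent, leaving $\sum_\alpha |\nabla^{\perp}\eta_\alpha|^2 = (N-n)\,|\nabla w|^2 + w^2\,|A|^2$; and $\sum_\alpha \sum_{ij}\langle A_{ij},\eta_\alpha\rangle^2 = w^2\,|A|^2$.

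Next I would integrate by parts: since $\nabla_\Sigma \frac{|x|^2}{4} = \frac{1}{2}\,x^{T}$, the operator $\cL$ is self-adjoint in $L^2(\e^{-\frac{|x|^2}{4}})$ and $\int_\Sigma \langle \eta_\alpha, \cL\,\eta_\alpha\rangle\,\e^{-\frac{|x|^2}{4}} = -\int_\Sigma |\nabla^{\perp}\eta_\alpha|^2\,\e^{-\frac{|x|^2}{4}}$. Putting $\eta = \eta_\alpha$ in \eqr{e:stabineq}, using $L\,\eta_\alpha = \cL\,\eta_\alpha + \langle A_{ij},\eta_\alpha\rangle\,A_{ij} + \frac{1}{2}\,\eta_\alpha$, summing over $\alpha$, and substituting the identities above, the curvature integrals $\pm\int_\Sigma w^2\,|A|^2\,\e^{-\frac{|x|^2}{4}}$ cancel, leaving
\begin{align*}
 0 \geq -(N-n)\int_\Sigma |\nabla w|^2\,\e^{-\frac{|x|^2}{4}} + \frac{N-n}{2}\int_\Sigma w^2\,\e^{-\frac{|x|^2}{4}} \, .
\end{align*}
Dividing by the codimension $N-n$ gives \eqr{e:12stab}.

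The computation is short, so I do not expect a serious technical obstacle; the one point that must be gotten exactly right --- and it is the mechanism from \cite{CM1} --- is the choice of test fields, made precisely so that the term $\langle A_{ij},\eta_\alpha\rangle\,A_{ij}$ in $L$ absorbs the contribution $w^2\,|A|^2$ produced by $|\nabla^{\perp}\eta_\alpha|^2$, leaving only the zeroth order term $\frac{1}{2}$ and hence the clean constant $2$. Finally, since the $\eta_\alpha$ are supported where $\Sigma$ is a smooth submanifold, the varifold structure plays no role and the integrations by parts are the classical ones.
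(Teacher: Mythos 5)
Your proof is correct and takes essentially the same route as the paper: test the stability inequality with $w\,e_\alpha^{\perp}$ for a Euclidean orthonormal frame, sum over $\alpha$, and use integration by parts against the Gaussian weight. The only difference is cosmetic: the paper invokes the identity $L\,v_i = \frac{1}{2}\,v_i$ from \cite{CM1}, \cite{AHW} as a black box, while you re-derive the resulting cancellation of the $w^2\,|A|^2$ terms directly from the Weingarten relation, which makes the argument more self-contained but is the same underlying mechanism.
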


\begin{proof}
When $\Sigma$ is a smooth embedded hypersurface, then \eqr{e:12stab} follows immediately from the stability inequality  with $\eta = w \, \nn$ (where $\nn$ is a unit normal) and integration by parts.  We will show that \eqr{e:12stab} holds in arbitrary codimension and without smoothness or embeddedness.

Let $e_1 , \dots , e_{N} \in \RR^N$ be a Euclidean orthonormal basis.  For each $i$, define  the normal vector field $v_i$ by taking the perpendicular part of the constant vector $e_i$
\begin{align}
	v_i = e_i^{\perp}\,  .
\end{align}
  Since the rank of the normal bundle is the codimension $N-n$, we have that
\begin{align}	\label{e:sumvi}
	\sum_{i} |v_i|^2 = N-n \, .
\end{align}
By Theorem $5.2$ in \cite{CM1} (see $5.1$ in \cite{AHW} for the higher codimension case), each $v_i$ 
satisfies{\footnote{Equation
\eqr{e:Lvi} is 
 special to shrinkers, as opposed to holding more generally for weighted minimal submanifolds in an ambient space with strictly positive Bakry-Emery Ricci curvature.}}
\begin{align}	\label{e:Lvi}
	L \, v_i = \frac{1}{2} \, v_i \, .
\end{align}
Now let $w$ have compact support on $\Reg ( \Sigma )$ and observe that
\begin{align}
	L \, (w \, v_i) = (\cL \, w) \, v_i + \frac{1}{2} \, w \, v_i + 2 \, \nabla^{\perp}_{\nabla w} v_i  \, .
\end{align}
Using $w\, v_i$ in the stability inequality \eqr{e:stabineq} gives
\begin{align}
	\frac{1}{2} \, \int w^2 \, |v_i|^2 \, \e^{ - \frac{|x|^2}{4}} \leq - \int_{\Sigma} 
	\left(  w\, (\cL \, w) \, |v_i|^2 + 2 \, w \, \langle v_i,\nabla^{\perp}_{\nabla w} v_i \rangle
	\right) \, \e^{ - \frac{|x|^2}{4}} \notag\\
	=- \int_{\Sigma} 
	\left(  w\, (\cL \, w) \, |v_i|^2 + w\,\langle \nabla w,\nabla |v_i|^2\rangle\right) \, \e^{ - \frac{|x|^2}{4}}\, .
\end{align}
Summing this over $i$ and using \eqr{e:sumvi} gives
\begin{align}
	\frac{N-n}{2} \, \int w^2  \, \e^{ - \frac{|x|^2}{4}} \leq - (N-n) \, \int_{\Sigma} 
	  w\, (\cL \, w)    \, \e^{ - \frac{|x|^2}{4}}  = (N-n) \, \int_{\Sigma} |\nabla w|^2 \, \e^{ - \frac{|x|^2}{4}} \, ,
\end{align}
where the last equality used the divergence theorem.
\end{proof}

\begin{proof}[Proof of Theorem \ref{t:stable2}]
Let $R > \sqrt{4+2\,n}$ be a radius to be chosen large enough below.     
  Since 
  \begin{align}
  	\cL \, |x|^2 = 2\,n - |x|^2
\end{align}
 by Lemma $3.20$ in \cite{CM1}, the maximum principle guarantees that $\Sigma$ intersects the closure of 
  $B_{\sqrt{2\,n}}$.
  
 Lemma \ref{l:areagrow} gives for $0< r_1 < r_2 < R$ that 
\begin{align}
	\left( 1 - \frac{2n}{r_2^2} \right) \, \frac{\Vol (B_{r_2} \cap \Sigma)}{r_2^n} \leq \frac{\Vol (B_{r_1} \cap \Sigma)}{r_1^n}  \, .
\end{align}
Let $\eta$ be a radial (i.e., a function of $|x|$) cutoff function on $\Sigma$ that is identically one on $B_{r_2-1}$ and cuts off linearly to zero from $r_2-1$ to $r_2$. 
Given $\epsilon > 0$,  Lemma \ref{l:singcut} gives a function
$\phi =\phi_{\epsilon}$ that vanishes on $B_{r_2} \cap \Sing (\Sigma)$, has $0 \leq \phi \leq 1$, and 
\begin{align}	\label{e:phiep1}
	 \int_{\Sigma} |\nabla \phi|^2 \, \e^{ - \frac{|x|^2}{4}} &\leq \epsilon \, , \\
	  \int_{ \Sigma}  (1- \phi^2) \, \e^{ - \frac{|x|^2}{4}} &\leq \epsilon \, . \label{e:phiep2}
\end{align}
It follows that $w = \eta \, \phi$ has compact support in the regular part of $B_{r_2} \cap \Sigma$.
Since   $\eta , \phi$ map to $[0,1]$,  the squared triangle inequality gives that
\begin{align}
	|\nabla w|^2 \leq 2\, |\nabla \eta|^2 + 2\, |\nabla \phi |^2 \, .
\end{align}
Integrating this and using that $|\nabla \eta| \leq 1$ vanishes inside $B_{r_2 -1}$ gives
\begin{align}
	\int |\nabla w|^2 \, \e^{ - \frac{|x|^2}{4} } &\leq 2\, \int   |\nabla \phi|^2 \, \e^{ - \frac{|x|^2}{4}} +
	 2\, \e^{ - \frac{ (r_2 -1)^2}{4}} \, \Vol (B_{r_2} \cap \Sigma) \notag \\
	&\leq 2\, \epsilon + 2\, \e^{ - \frac{ (r_2 -1)^2}{4}} \, \Vol (B_{r_2} \cap \Sigma) \, ,
\end{align}
where all integrals are on $\Sigma$ and the last inequality used \eqr{e:phiep1}. On the other hand, taking $r_2 > r_1 + 1$ so that $\eta \equiv 1$ on $B_{r_1}$, we have that
\begin{align}
	\int w^2 \, \e^{ - \frac{|x|^2}{4} } &\geq \int_{B_{r_1} \cap \Sigma} w^2 \, \e^{ - \frac{|x|^2}{4} } =
	\int_{ B_{r_1} \cap \Sigma} \phi^2 \, \e^{ - \frac{|x|^2}{4} } \notag \\
	&\geq
	\e^{ - \frac{r_1^2}{4} } \, \Vol (B_{r_1} \cap \Sigma) -  \int (1-\phi^2) \,  \e^{ - \frac{|x|^2}{4}} \geq 
	\e^{ - \frac{r_1^2}{4} } \, \Vol (B_{r_1} \cap \Sigma) - \epsilon \, ,
\end{align}
where the last inequality used \eqr{e:phiep2}.   If $\Sigma$ was Gaussian stable, then 
 Proposition \ref{p:stable1} would give that
 \begin{align}
 	\int w^2 \, \e^{ - \frac{|x|^2}{4}} \leq 2 \, \int |\nabla w|^2 \, \e^{ - \frac{|x|^2}{4} } \, .
 \end{align}
 Using the last two bounds on $w$ and $\nabla w$, this would imply that
 \begin{align}
 	\e^{ - \frac{r_1^2}{4} } \, \Vol (B_{r_1} \cap \Sigma) - \epsilon  \leq 4\, \epsilon + 4\, \e^{ - \frac{ (r_2 -1)^2}{4}} \, \Vol (B_{r_2} \cap \Sigma) \, .
 \end{align}
 Since $\epsilon > 0$ is arbitrary, this gives 
  \begin{align}
 	\e^{ - \frac{r_1^2}{4} } \, \Vol (B_{r_1} \cap \Sigma)   \leq 4\, \e^{ - \frac{ (r_2 -1)^2}{4}} \, \Vol (B_{r_2} \cap \Sigma) \, .
 \end{align}
 However, Proposition \ref{l:areagrow}
gives that
\begin{align}
	\left( 1 - \frac{2n}{r_2^2} \right) \, \frac{\Vol (B_{r_2} \cap \Sigma)}{r_2^n} \leq \frac{\Vol (B_{r_1} \cap \Sigma)}{r_1^n}  \, .
\end{align}
The last two inequalities are contradictory for $r_1$ fixed and $r_2$ large enough, so $\Sigma$ cannot be Gaussian stable.
\end{proof}

\subsection{Proof of the strong Frankel theorem}

   The idea for the strong Frankel theorem is that if there are two distinct shrinkers in a large ball, then 
a barrier argument produces an $F$-minimizer between them, violating the strong Bernstein theorem.

\begin{proof}[Proof of Theorem \ref{c:connecting}]
Applying the maximum principle to 
$|x|^2$, it follows that 
\begin{align}
	   \overline{B_{\sqrt{2n}}} \cap \Sigma_i \ne \emptyset {\text{ for }} i=1,2 \, .
\end{align}
Let $R_n$ be from Theorem \ref{t:stable2}.
 We will argue by contradiction, so assume that 
\begin{align}
	B_{R_n} \cap \Sigma_1 \cap \Sigma_2= \emptyset \, .
\end{align} 
    In particular, we can find a line segment 
    \begin{align}
    	\sigma \subset \overline{B_{\sqrt{2n}}}
\end{align}
 that starts at a point in $\Sigma_1$, ends at a point in $\Sigma_2$, and is otherwise disjoint from $\Sigma_1 \cup \Sigma_2$.  Let $\Omega$ denote the component of 
$B_{R_n} \setminus (\Sigma_1 \cup \Sigma_2)$ that contains the interior of $\sigma$.  It follows that $\sigma$ has non-zero linking number with $\partial \Sigma_1$ in $\Omega$.  

Next, we solve for the $F$-minimizer $\Gamma \subset \overline{\Omega}$ with $\partial \Gamma = \partial (B_{R_n} \cap \Sigma_1)$ (see, e.g., \cite{Fe} $5.1.6$). The part $B_{R_n} \cap \Gamma$ of $\Gamma$ disjoint from $\partial B_{R_n}$  must be $F$-stationary, but $\Gamma$ might contain pieces in $\partial B_{R_n}$ that do not satisfy the shrinker equation. However, 
 the linking argument implies that $\Gamma \cap \sigma \ne \emptyset$.  Thus, there is a connected component $\Gamma_1$ of $B_{R_n} \cap \Gamma$ with 
 \begin{align}
 	\Gamma_1 \cap \sigma \ne \emptyset \, .
\end{align}
  For $n \leq 6$,
the interior regularity theory for area-minimizers, Theorem $4$ in \cite{SS} or section $18$ in \cite{Wi}, 
 gives that $\Gamma_1$ is smooth, embedded and stable.  This contradicts Theorem \ref{t:stable}, completing the proof when $n \leq 6$.  
    For $n \geq 7$, $\Gamma_1$ can have singularities, but \cite{SS} and Theorem $5.8$ in \cite{CN} give that
 $\Reg (\Gamma_1)$ is stable and 
 \begin{align}
 	\dim_M \, \Sing (\Gamma_1) \leq n- 7 \, .
\end{align}
 This contradicts Theorem \ref{t:stable2}, so we conclude that 
 \begin{align}
 	B_{R_n} \cap \Sigma_1 \cap \Sigma_2 \ne \emptyset \, .
\end{align}
\end{proof}

  \begin{proof}[Proof of Corollary \ref{c:connectup}]
Connectedness follows immediately from Theorem \ref{c:connecting} by regarding the possible connected components of $B_R \cap \Sigma$ as distinct shrinkers with boundary in $\partial B_R$.
  \end{proof}
  
   \begin{proof}[Proof of Corollary \ref{c:halfspace}]
  This follows from Theorem \ref{c:connecting} with $\Sigma_1 = \Sigma$ and $\Sigma_2$ equal to a generalized cylinder $\SS^k_{\sqrt{2k}} \times \RR^{n-k}$.
  \end{proof}
  
   Theorem \ref{t:halfspace} follows immediately from the following stronger effective version:
  
  \begin{Thm}	\label{t:halfspace2}
Let $\Omega^+$ and $\Omega^-$ be the  components of the complement of 
$\SS^k_{\sqrt{2k}} \times \RR^{n-k}$ for some $k \in \{ 0,1, \dots , n\}$.
If $\Sigma^n   \subset \RR^{n+1}$ is a properly
embedded shrinker, $R \geq R_n$, 
\begin{align}
	\partial \Sigma  \subset \partial B_{R} , \,  B_{R} \cap \Sigma \ne \emptyset {\text{ and }} B_R \cap \Sigma  {\text{ intersects at most one of }} \Omega^{\pm} \, , 
\end{align}
then $B_{R} \cap \Sigma \subset \SS^k_{\sqrt{2k}} \times \RR^{n-k}$.
\end{Thm}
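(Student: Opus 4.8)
The plan is to deduce Theorem \ref{t:halfspace2} from the strong Frankel theorem, Theorem \ref{c:connecting}, together with the (weak) maximum principle for the shrinker equation. Since $\SS^k_{\sqrt{2k}} \times \RR^{n-k}$ is itself a complete properly embedded shrinker, and since $B_R \cap \Sigma$ is a compact embedded shrinker with $\partial(B_R\cap\Sigma)\subset\partial B_R$ and $B_R\cap\Sigma\neq\emptyset$, Theorem \ref{c:connecting} (applied with $\Sigma_1 = B_R\cap\Sigma$ and $\Sigma_2$ a large piece of the cylinder $C_k := \SS^k_{\sqrt{2k}}\times\RR^{n-k}$ inside $\overline{B_R}$) forces
\begin{align}
	B_{R_n} \cap \Sigma \cap C_k \ne \emptyset \, .
\end{align}
So there is an interior point $p \in B_R\cap\Sigma$ lying on $C_k$. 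The hypothesis that $B_R\cap\Sigma$ meets at most one of $\Omega^\pm$ says that near $p$, the hypersurface $\Sigma$ lies (weakly) on one side of $C_k$ --- say in $\overline{\Omega^+}$. Both $\Sigma$ and $C_k$ solve the shrinker equation $\bH = \frac12 x^\perp$, which is a quasilinear elliptic equation for the graph of $\Sigma$ over $C_k$ near $p$. I would then invoke the strong maximum principle (and the Hopf boundary point lemma, in case $p$ is a boundary point of the contact set): two solutions of $\bH=\frac12 x^\perp$ that touch at an interior point and satisfy $\Sigma \subset \overline{\Omega^+}$ locally must coincide in a neighborhood of $p$.

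Having established that the contact set $\{p \in B_R\cap\Sigma : p\in C_k\}$ is nonempty, open (by the strong maximum principle applied at each contact point, writing $\Sigma$ locally as a graph over $C_k$ and noting the difference of the two graph functions satisfies a linear elliptic inequality forcing it to vanish on a neighborhood once it vanishes to second order at an interior touching point), and closed (by continuity), a connectedness argument finishes the proof --- provided $B_R\cap\Sigma$ is connected. Connectedness of $B_R\cap\Sigma$ is exactly Corollary \ref{c:connectup}, which applies since $R\geq R_n$. Hence the contact set is all of $B_R\cap\Sigma$, i.e. $B_R\cap\Sigma\subset C_k = \SS^k_{\sqrt{2k}}\times\RR^{n-k}$, which is the assertion. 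Theorem \ref{t:halfspace} then follows from Theorem \ref{t:halfspace2} by taking $R\to\infty$: a complete properly embedded shrinker lying on one side of $C_k$ satisfies the hypothesis for every $R$, so $\Sigma \cap B_R \subset C_k$ for all large $R$, whence $\Sigma = C_k$.

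The step I expect to be the main obstacle is the application of the strong maximum principle at the contact point $p$. There are two subtleties. First, one must ensure that near the touching point the cylinder $C_k$ and $\Sigma$ can simultaneously be written as graphs over a common domain (a piece of $C_k$ or its tangent plane) --- this is a routine consequence of $C_k$ being smooth and $p$ being a regular point of $\Sigma$, but the "one-sided" hypothesis has to be correctly translated into an inequality between the two graph functions, keeping track of the orientation so that it is the right-signed inequality for the strong maximum principle. Second, if the first contact point happens to lie on $\partial B_{R_n}$ or more generally is a limit point rather than a genuine interior touching point, one needs the interior touching to propagate; here it is cleaner to note that Theorem \ref{c:connecting} gives a point in the open ball $B_{R_n}$, hence an interior contact point, and to run the open/closed argument on the connected manifold $B_R\cap\Sigma$ rather than worrying about the boundary. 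A secondary point worth a sentence is to confirm that a sufficiently large compact piece of $C_k$ inside $\overline{B_R}$ is a legitimate choice of $\Sigma_2$ in Theorem \ref{c:connecting}, i.e. it is a compact embedded shrinker with boundary in $\partial B_R$ meeting $B_R$ --- which is clear since $C_k$ passes through points of norm $\sqrt{2k}\leq\sqrt{2n} < R_n \leq R$.
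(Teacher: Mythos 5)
Your proposal is correct and follows essentially the same route as the paper: use the strong Frankel theorem (via Corollaries \ref{c:halfspace} and \ref{c:connectup}) to get a contact point in $B_{R_n}$ and connectedness of $B_R\cap\Sigma$, then apply the strong maximum principle for the shrinker equation together with the open/closed argument on the connected set $B_R\cap\Sigma$. The paper states this in one sentence, while you unfold the graph-representation and openness-of-contact-set details; that unfolding is accurate and matches what the paper implicitly relies on.
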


  \begin{proof} 
  By assumption, $B_R \cap \Sigma$ lies on one side of $\SS^k_{\sqrt{2k}} \times \RR^{n-k}$.  Since 
    Corollary \ref{c:halfspace} gives that $B_R \cap \Sigma$ is connected and  must intersect $\SS^k_{\sqrt{2k}} \times \RR^{n-k}$, the strong maximum principle (for minimal hypersurfaces in a Riemannian manifold) 
    gives that they must agree identically.
  \end{proof}

\end{document}